\documentclass[a4paper,12pt]{amsart}
\usepackage[letterpaper, margin=1.2in]{geometry}
\usepackage{latexsym}
\usepackage{amssymb}
\usepackage{amsmath}
\usepackage{amsfonts}
\usepackage{color}
\usepackage{tikz}
\usepackage[normalem]{ulem }
\usepackage{soul}
\newtheorem{thm}{Theorem}[section]

\newtheorem{lem}[thm]{Lemma}

\newtheorem{cor}[thm]{Corollary}

\theoremstyle{definition}

\newtheorem*{rem}{Remark}

\newcommand{\R}{\mathbb R}

\newcommand{\Z}{\mathbb Z}
\newcommand{\q}{\mathbb Q}

\newcommand{\F}{\mathbb F}

\newcommand{\p}{\mathfrak{p}}
\newcommand{\Q}{\mathfrak{Q}}
\def\ph{\phi}
\def\al{\alpha}

\def\s{\sigma}
\def\d{\delta}
\def\t{\theta}
\def\om{\omega}

\def\Ga{\Gamma}
\def\ol{\overline}

\def\ol{\overline}
\newcounter{cs}
\stepcounter{cs}
\newcommand{\casos}{\begin{itemize}}
\newcommand{\fcasos}{\end{itemize}\setcounter{cs}{1}}

\newfont{\tit}{cmr12 scaled \magstep3}
\begin{document}
\title[]{On a Theorem of Dedekind}
\author{Lhoussain El Fadil}
\address[L. El Fadil]{Department of Mathematics, Faculty of Sciences Dhar-El Mahraz, University of Sidi Mohamed Ben Abdellah, P.O.B. 1796, Fez, Morocco} \email{lhouelfadil2@gmail.com}
\author{Abdulaziz Deajim}
\address[A. Deajim]{Department of Mathematics
King Khalid University
P.O.Box 9004, Abha
Saudi Arabia} \email{deajim@kku.edu.sa, deajim@gmail.com}
\author{Ahmed Najim}
\address[A. Najim]{Department of Mathematics, Faculty of Sciences Dhar-El Mahraz, University of Sidi Mohamed Ben Abdellah, P.O.B. 1796, Fez, Morocco}{ \email{najimed9@yahoo.fr }}
\keywords{Dedekind's Criterion, valued fields, extension of valuations, prime ideal factorization}
\subjclass[12010]{12J10, 13A18, 13B22}
\date{}
\begin {abstract}
Let $(K,\nu)$ be an arbitrary valued field with valuation ring $R_\nu$ and $L=K(\al)$, where  $\al$ is a root of a monic irreducible polynomial $f\in R_\nu[x]$. In this paper, we characterize the integral closedness of $R_\nu[\al]$ in such a way our results  extend Dedekind’s criterion and a well-known theorem of Dedekind, without the assumption of separability of the extension $L/K$. We show also the  converse of that Dedekind’s theorem.
\end {abstract}

\maketitle

\section{{\bf Introduction}}\label{intro}

For a valued field  $(K, \nu)$, let $R_\nu$ be its  valuation ring, $M_\nu$ its  maximal ideal, $k_\nu=R_\nu/M_\nu$  its  residue field,  {$(K^h,\nu^h)$  a fixed  henselization of  $(K, \nu)$, and  $\ol{K}$  a fixed algebraic closure of $K$. We denote also by $\nu$ (resp. $\nu^h$) the Gaussian extension of $\nu$  (resp.  $\nu^h$) to the field $K(x)$  (resp. to  $K^h(x)$).
 Let  $\Ga_\nu$ be the  value group of $\nu$} and  $\Ga_\nu^+=\{ g\in \Ga_\nu,\,g>0\}$. As  $\Ga_\nu$ is a totally ordered group, if $\Ga_\nu^+$ has a smallest element, then it is unique and  we denote it by
 $\min(\Ga_\nu^+)$.   Let $f\in R_\nu[x]$ be a monic irreducible polynomial, $\al\in \ol{K}$  a root of $f$, $L=K(\al)$  the simple  extension of $K$ generated by $\al$, and {$R_L$}  the integral closure of $R_\nu$ in $L$. Denote by $\ol{f}$ the reduction   polynomial in $k_\nu[x]$ corresponding to $f$ and  consider  the  factorization   $\ol{f}=\prod_{i=1}^r \ol{\phi_i}^{l_i}$  into powers of monic irreducible polynomials in  $k_\nu[x]$, where  for every   $i=1, \dots, r$, $\phi_i\in R_\nu[x]$ is a monic lifting of $\ol{\phi_i}$. Recall that if $R_L$ is a free $R_\nu$-module, then it is  of rank $n=$deg$(f)$ as an $R_\nu$-module. In this case, the index of $f$ is the determinant of the transition matrix from an $R_\nu$-basis of $R_L$ to $(1,\al,\dots,\al^{n-1})$, and we denote the $\nu$-valuation of this determinant  by $ind(f)$. In particular, if $L/K$ is a finite separable extension and $\nu$ is a discrete rank-one valuation, then $R_L$ is a free $R_\nu$-module of finite rank.  
 For $R=\Z$,  $p$  a rational prime integer,  and $\nu_p$  the  $p$-adic valuation of the field $\q$, let $L/\q$ be a finite extension defined by a complex root $\al$ of a monic irreducible polynomial $f\in \Z[x]$. As $L/\q$ is separable,  $R_L$ is a free $\Z_p$-module of rank deg$(f)$. Furthermore,  $ind(f)$ is the $\nu_p$-valuation of the index of the finite group   $\Z_L/\Z[\al]$, where $\Z_L$ is the ring of integers of $L$, where $\Z_L$ is the ring of integers of $L$. \\
For a number field $K={\mathbb{Q} }(\alpha)$ generated by a complex root $\alpha$ of a monic irreducible polynomial $f(x)\in {\mathbb{Z} }[x]$, let ${\mathbb{Z} }_K$ be the ring of integers of $K$. For any prime integer $p$,   a well-known  theorem of  Dedekind says that  If $p$  does not divide the index $({\mathbb{Z} }_K:{\mathbb{Z} }[\alpha])$,  then the prime factorization of $p{\mathbb{Z} }_K$ has the same shape as the irreducible factorization of $\overline{f(x)}$ modulo $p$.
 In order to apply this  theorem effectively,
  in $1878$, Dedekind  { proved } the following  well known Dedekind's criterion, which allows to test if $p$  divides the index $({\mathbb{Z} }_K : {\mathbb{Z} }[\alpha])$. In 1977,  Uchida  proposed a new version which allows to test the integral closedness of $R[\alpha]$, where $R$ is a Dedekind ring. He used it to give a
 short proof of the monogecity of cyclotomic fields (see \cite{U}). In 1984, L\"uneburg  discovered another test. Unaware of Uchida's work, he  used his test to give an alternative proof of the monogecity of cyclotomic fields. In 2006, Ershov generalized this criterion to any arbitrary rank (see \cite{Er}).  In 2010, Khanduja and  Kumar gave a new  proof to Ershov's result in \cite[Theorem 1.1]{KK1}. In 2021, Jakhar and Khanduja gave a similar proof to Ershov's result in \cite[Theorem 1.1]{KK2}.
 In the papers  \cite{KK1, KK2}, the proofs of Theorem 1.1 are based on  the correspondence given in \cite[17.17]{En}, which assumes that this correspondence is valid under the condition of separability of the fields extension. In \cite{DBE}, El Fadil et al. gave another a new version of the Dedekind's criterion, which allows to characterize  the integral closedness of $R_\nu[\al]$. This new version computationally improves the results given in \cite{Er, KK1} in the context of  separability  of the extension
$L/K$. Besides,  Khanduja and  Kumar gave in  \cite{KK8} a converse of  Dedekind's theorem.
 Namely, for a Dedekind domain $R$ with quotient field $K$ and $L/K$ a finite  simple {\it separable} extension generated by  a root $\al$ of a monic irreducible polynomial $f\in R[x]$,  they proved that the condition $\pi$ does not divide
$\mbox{ind}(\alpha)$  is a necessary condition  for the existence   of exactly $r$
distinct prime ideals of $R_L$ lying above $\pi$, with generators,
 ramification indices, and residue degrees all as in  {  the} well-known  theorem of Dedekind, where $R_L$ is the integral closure of $R$ in $L$.
The aim of  this paper is that for any valued field $(K,\nu)$  and any finite simple extension $L/K$ generated by a root of  monic irreducible polynomial $f\in K[x]$, we show in Theorem \ref{Endler} that the valuations of $L$ extending $\nu$ are in one-to-one correspondence with the monic irreducible factors of $f$ in $K^h[x]$, where $K^h$ is a fixed henselization of $(K, \nu)$. The result is well-known when $f$ is separable (see \cite[17.17]{En} for instance).  However, we here drop this requirement. Furthermore, if $f\in R_\nu[x]$ is a monic irreducible polynomial, then Theorem \ref{main} gives a characterization of the integral closedness of  $R_\nu[\al]$. This is done with no separability assumption on $f(x)$ in contrast with \cite[Theorem 2.5]{DBE}. Theorem \ref{main} further improves \cite[Theorem 4.1]{KK1} in the sense that $K$ is not assumed to be Henselian. Theorem \ref{main 2} gives another characterization of the integral closedness of  $R_\nu[\al]$ via some information on valuations of $L$ extending $\nu$.
 Finally, we prove a new version based on the remainders of the Euclidean division  of $f(x)$ by  monic polynomials $\ph_1,\dots, \ph_r$ in $R_\nu[x]$, where $\ol{\ph_1},\dots, {\ph_r}$  are the monic irreducible divisors of $\ol{f(x)}$ in  $k_\nu[x]$. 
In addition, if $L/K$ is a separable extension,  $R_\nu$ is a discrete valuation  ring with maximal ideal  $M_\nu=(\pi)$, and   $\pi$ does not divide the index $[R_L:R_\nu[\al]]$, then thanks to a well-known  theorem of Dedekind,  the factorization of the ideal  $\pi R_L$ can be directly determined by the factorization of ${\overline f(x)}$ over $k_\nu$ (see \cite[ Chapter I, Proposition 8.3]{Neu}). Namely,  $\pi R_L=\prod_{i=0}^r \p_i^{l_i}$, where every $\p_i=pR_L+\phi_i(\al)R_L$ (so $f(\p_i)={\mbox{deg}}(\phi_i)$ is the residue degree of $\p_i$).
\section{{\bf Main Results}}
Keeping the above notations, denote by $(K^h, \nu^h)$ a fixed henselization of $(K, \nu)$, $R_{\nu^h}$ its valuation ring, $M_{\nu^h}$ its maximal ideal, and  $\ol{\nu^h}$ the unique extension of $\nu^h$ to the algebraic closure $\ol{K^h}$ of $K^h$.
If $L=K(\al)$ is a simple separable extension of $K$, defined by $\al\in \ol{K^h}$ a root of a monic irreducible polynomial $f\in K[x]$, then   a well-known important result,
\cite[17.17]{En},  asserts a one-one  correspondence between the valuations of $L$ extending $\nu$ and the monic irreducible factors of $f$ over $K^h$.
In \cite{KK1}, in order to show that the separability requirement of $ L / K $ is not necessary for the Dedekind's criterion given in \cite{Er}, the proof given by Khanduja and Kumar for  \cite[Theorem 1.1]{KK1} is based on \cite[Theorem 2.B]{KK1}, which was stated without proof and rather deduced from \cite[17.17]{En}, {which in turn assumes the separability of the extension.} {\it For this reason, we start this section by making clear  that the  correspondence in \cite[17.17]{En} remains valid  without  assuming  that $L/K$ is a separable extension.} {It is noted here that \cite{KK1} also contains a hint on part of our proof of the following result, but it does not give a complete proof. This hint was repeated in \cite{KK}}.
\begin{thm}\label{Endler}
Let $(K,\nu)$ be a valued field, $f\in R_\nu[x]$ be monic and irreducible over $K$, $L=K(\al)$ for some root $\al\in \ol{K^h}$ of $f$. Then there is a one-one  correspondence between the valuations of $L$ extending $\nu$ and the monic irreducible factors of $f$ over $K^h$. More precisely, if $f=\prod_{j=1}^t f_j^{a_j}$ is the monic irreducible factorization of $f$  in $K^h[x]$, then
every $a_j=1$ and there are exactly $t$ distinct valuations  $\om_1, \dots, \om_t$ of $L$ extending $\nu$. Moreover, for every $j=1, \dots, t$, if $\al_j$ is any root of $f_j$ in $\ol{K^h}$, then the valuation $\om_j$ corresponding to $f_j$ is precisely the valuation of $L$ satisfying: $\om_j(P(\al))=\ol{\nu^h}(P(\al_j))$ for any $P\in K[x]$.
\end{thm}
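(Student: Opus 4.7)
The plan is to base everything on the isomorphism $K^h\otimes_K L\cong K^h[x]/(f)$ together with the universal property of the henselization. First I would prove that $K^h\otimes_K L$ is reduced, which forces every exponent $a_j=1$ in the factorization $f=\prod_{j=1}^t f_j^{a_j}$ over $K^h$. To see this, write $K^h$ as the filtered colimit of its finite subextensions $K'/K$; each such $K'$ is separable over $K$ because the henselization is assembled from pointed étale extensions. For $K'=K[y]/(g(y))$ with $g$ separable, $K'\otimes_K L\cong L[y]/(g(y))$ is a product of fields (the roots of $g$ remain simple in $\ol L$), hence reduced. Filtered colimits preserve reducedness, so $K^h\otimes_K L$ is reduced and $f$ is squarefree in $K^h[x]$.

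The Chinese Remainder Theorem then yields $K^h\otimes_K L\cong\prod_{j=1}^t L_j^h$ with $L_j^h:=K^h[x]/(f_j)$ a finite extension of the henselian field $(K^h,\nu^h)$. Each $L_j^h$ therefore carries a unique valuation $\om_j^h$ extending $\nu^h$; picking any root $\al_j\in\ol{K^h}$ of $f_j$ gives a $K^h$-embedding $L_j^h\hookrightarrow\ol{K^h}$ sending the class of $x$ to $\al_j$, so that $\om_j^h(P(\al))=\ol{\nu^h}(P(\al_j))$. Composing the canonical map $L\hookrightarrow K^h\otimes_K L$ with the projection onto the $j$-th factor gives an embedding $L\hookrightarrow L_j^h$, and pulling $\om_j^h$ back along it produces a valuation $\om_j$ of $L$ extending $\nu$ and satisfying the stated formula.

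To see that $\{\om_1,\dots,\om_t\}$ is exactly the set of extensions of $\nu$ to $L$, take any such valuation $\om$ with henselization $(L^h,\om^h)$; the universal property of $(K^h,\nu^h)$ applied to the valuation-preserving map $(K,\nu)\to(L^h,\om^h)$ yields a unique $K$-embedding $K^h\hookrightarrow L^h$. Then $K^h(\al)\subseteq L^h$ is a finite, hence henselian, extension of $K^h$ containing $L$, so by minimality $L^h=K^h(\al)\cong K^h[x]/(f_j)=L_j^h$, where $f_j$ is the minimal polynomial of $\al$ over $K^h$ (necessarily a factor of $f$); consequently $\om=\om_j$. Distinctness runs the same argument backwards: if $\om_j=\om_k$ on $L$, the uniqueness of the henselization of $L$ at that common valuation, together with the uniqueness of the $K^h$-structure on it, produces a $K^h$-isomorphism $L_j^h\to L_k^h$ fixing $\al$, forcing $f_j=f_k$. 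The main obstacle is justifying the identification $L^h=K^h(\al)$ cleanly despite the possible inseparability of $L/K$, but the universal property of henselization supplies exactly this without needing separability, so once both reducedness of $K^h\otimes_K L$ and that universal property are in hand, the remaining steps are bookkeeping with the CRT decomposition.
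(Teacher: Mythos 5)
Your argument is correct, but it follows a genuinely different route from the paper. The paper handles only the inseparable case (quoting \cite[17.17]{En} for the separable one): it writes $f(x)=g(x^{p^d})$ with $g$ separable and irreducible, gets the $t$ valuations of $L_1=K(\al^{p^d})$ from Endler's separable statement, extends each uniquely to $L$ because $L/L_1$ is purely inseparable, and uses Capelli's lemma to see that the $g_i(x^{p^d})$ are exactly the irreducible factors of $f$ over $K^h$; the explicit formula $\om_j(P(\al))=\ol{\nu^h}(P(\al_j))$ is then checked directly. You instead treat the separable and inseparable cases uniformly: separability of $K^h/K$ (not of $L/K$) gives reducedness of $K^h\otimes_K L\cong K^h[x]/(f)$, hence $a_j=1$; the CRT decomposition into the henselian finite extensions $L_j^h=K^h[x]/(f_j)$, each with its unique extension of $\nu^h$, produces the valuations together with the stated formula; and the universal property of the henselization gives both surjectivity (via $L^h=K^h(\al)$) and distinctness. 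Your approach is self-contained (it reproves rather than cites \cite[17.17]{En}), avoids the characteristic-$p$ reduction and Capelli's lemma, and makes the bijection and the formula structurally transparent; the paper's approach is more elementary and leans on the existing literature, at the cost of splitting into cases. Two points in your sketch deserve an explicit line: that no proper intermediate field $L\subseteq F\subseteq L^h$ with the restricted valuation is henselian (the ``minimality'' you invoke), and that $L_j^h$ really is a henselization of $(L,\om_j)$ --- both follow from the uniqueness clause in the universal property (any valuation-compatible $K$-embedding of $K^h$ into a henselian valued extension is unique, so the image of $L^h$ in $L_j^h$ contains $K^h$ and the image of $\al$, hence is all of $L_j^h$); with those remarks added, the proof is complete and requires no separability of $L/K$ anywhere.
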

\begin{proof}
We only need to concern ourselves here with the inseparable case, as \cite[17.17]{En} deals with its separable counterpart. Assume that $f$ is inseparable over $K$. Then the characteristic of $K$ is a prime integer $p$. Let $d$ be the largest integer such that $f\in K[x^{p^d}]$. Then $f(x)=g(x^{p^d})$ for some separable polynomial $g$ over $K$. As $f$ is irreducible over $K$, so is $g$. Let $L_1=K(\al^{p^d})$ be the field extension of $K$, generated by $\al^{p^d}$ (a root of $g$){, and}  $g=\prod_{i=1}^t g_i$ be the factorization of $g$  in $K^h[x]$ with every $g_i$ is monic. As $g$ is separable, by \cite[17.17]{En}, there are exactly $t$ distinct valuations $v_1, \dots, v_t$ of $L_1$ extending $\nu$. Let $\om$ be a valuation  of $L$ extending $\nu$. Then its restriction $\om_{L_1}$ to $L_1$ is a valuation extending $\nu$. In order to show there are exactly $t$ distinct valuations $\om_1, \dots, \om_t$ of $L$ extending $\nu$, it suffices to show that for every valuation $\om_{L_1}$ of ${L_1}$ extending $\nu$,  there is only one valuation  $\om$ of ${L}$ extending $\om_{L_1}$. As $L/L_1$ is purely inseparable (because $x^{p^d}-\al^{p^d}$ is the minimal polynomial of $\al$ over $L_1$), there { is } a unique extension $\om$ of $\om_{L_1}$ to $L$ (see \cite[Corollaries 13.5 and 13.8]{En}). Since $g_i(x)$ is irreducible over $K^h$, for every $i=1,\dots,t$, then by the Capelli's Lemma (see \cite{FS}), $f_i(x)=g_i(x^{p^d})$ is irreducible over $K^h$ as well. Thus, $f=\prod_{j=1}^t f_j$ is the factorization of $f$  in $K^h[x]$. Hence for every $i=1,\dots,t$, $a_i=1$ and by uniqueness of the factorization of $f$ in $K^h[x]$, $f$ has exactly $t$ distinct monic irreducible factors over $K^h$.
This shows that there is a one-one  correspondence between the valuations of $L$ extending $\nu$ and the monic irreducible factors of $f$ over $K^h$. For the last statement of the theorem, it is easy to check that  for every $j=1,\dots,t$, the map defined by  $\om_j(P(\al))=\ol{\nu^h}(P(\al_j))$ for any $P\in K[x]$, defines a valuation on $L$. In fact, we have just to check the first property of a valuation, which is true because $f$ is the minimal polynomial of $\al_j$ over $K$. Moreover, as for every $i,j=1,\dots,t$, $i\neq j$ implies that $v_i\neq v_j$, then $\om_i\neq \om_j$.
\end{proof}
The next goal of this paper is to characterize the integral closedness of $R_\nu[\al]$.    
For this sake,
the following lemmas play key roles.
\begin{lem}\label{lemma} Keep the notations and assumptions as in Theorem \ref{Endler}, and let  $\ol{f}=\prod_{i=1}^r\ol{\ph_i}^{l_i}$ be the factorization into powers of monic irreducible polynomials of $k_\nu[x]$.
\begin{itemize}
\item[(i)] For every $i=1, \dots, r$, $\om_j(\phi_i(\al))>0$ for some valuation $\om_j$ of $L$ extending $\nu$.
\item[(ii)] For every valuation $\om_j$ of $L$ extending $\nu$ and every nonzero $P\in R_\nu[x]$, $\om_j(P(\al))\geq \nu^{h}(P)$.
\item[(iii)] For every $i=1, \dots, r$ and $\om_j$ a  valuation  of $L$ extending $\nu$ with $\om_j(\ph_i(\al))>0$, the equality holds in part (ii) if and only if  $\overline{\phi_i}$ does not divide $\overline{P_0}$, where $P_0=\frac{P}{a}$ with $a\in K$ and $\nu^h(a)=\nu^h(P)$.\\
In particular, $\om_j(\ph_i(\al))>0$ and if ${\mbox{ deg}}(P) < {\mbox{ deg}}(\phi_i)$, then $\om_j(P(\al))=\nu^{h}(P)$.
\end{itemize}
\end{lem}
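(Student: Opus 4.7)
The plan is to work in the henselization throughout and repeatedly exploit the explicit description from Theorem~\ref{Endler}: the valuations $\om_j$ of $L$ extending $\nu$ biject with the monic irreducible factors $f_j$ of $f$ in $K^h[x]$, via $\om_j(P(\al))=\ol{\nu^h}(P(\al_j))$ for any root $\al_j$ of $f_j$. A preliminary step I would establish first is that each monic factor $f_j$ actually lies in $R_{\nu^h}[x]$: applying Gauss's lemma to the Gaussian extension $\nu^h$ on $K^h(x)$ gives $0=\nu^h(f)=\sum_{j=1}^t\nu^h(f_j)$, and since each $f_j$ is monic we have $\nu^h(f_j)\le 0$, so all $\nu^h(f_j)=0$. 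In particular every $\al_j\in R_{\ol{\nu^h}}$, hence $\ol{\nu^h}(\al_j)\ge 0$, and the reductions $\ol{f_j}\in k_\nu[x]$ are well-defined.

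For (i), I would reduce the factorization modulo $M_{\nu^h}$ to obtain $\ol f=\prod_{j=1}^t\ol{f_j}$ in $k_\nu[x]$, and then invoke Hensel's lemma in the henselian field $K^h$: coprime factorizations in $k_\nu[x]$ lift uniquely to $R_{\nu^h}[x]$, which forces each $\ol{f_j}$ to be a power of a single $\ol{\ph_i}$, with distinct $j$ giving distinct $i$. Comparing with $\ol f=\prod\ol{\ph_i}^{l_i}$, we conclude $t=r$ and, after reindexing, $\ol{f_j}=\ol{\ph_j}^{l_j}$. From $f_j(\al_j)=0$ I then read off $\ol{\ph_j}(\ol{\al_j})=0$, i.e.\ $\om_j(\ph_j(\al))=\ol{\nu^h}(\ph_j(\al_j))>0$. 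For (ii), I would simply expand $P=\sum_m c_m x^m$ with $c_m\in R_\nu$ and bound $\om_j(P(\al))=\ol{\nu^h}\bigl(\sum_m c_m\al_j^m\bigr)\ge \min_m\bigl(\nu^h(c_m)+m\,\ol{\nu^h}(\al_j)\bigr)\ge \min_m\nu^h(c_m)=\nu^h(P)$, using $\ol{\nu^h}(\al_j)\ge 0$.

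For (iii), I would write $P=aP_0$ with $a\in K$ satisfying $\nu(a)=\nu^h(P)$ (this is possible because $\Ga_{\nu^h}=\Ga_\nu$), so that $\nu^h(P_0)=0$, $P_0\in R_{\nu^h}[x]$, and $\ol{P_0}\ne 0$ in $k_\nu[x]$. Then $\om_j(P(\al))=\nu^h(P)+\ol{\nu^h}(P_0(\al_j))$, so equality in (ii) is equivalent to $\ol{\nu^h}(P_0(\al_j))=0$, i.e.\ $\ol{P_0}(\ol{\al_j})\ne 0$. Now the hypothesis $\om_j(\ph_i(\al))>0$ yields $\ol{\ph_i}(\ol{\al_j})=0$, and since $\ol{\ph_i}$ is monic irreducible over $k_\nu$ it is the minimal polynomial of $\ol{\al_j}$ over $k_\nu$; hence $\ol{P_0}(\ol{\al_j})=0$ iff $\ol{\ph_i}\mid\ol{P_0}$, which is exactly the claimed criterion. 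The ``in particular'' part is then immediate: the existence of $\om_j$ with $\om_j(\ph_i(\al))>0$ is (i), and when $\deg P<\deg\ph_i$ we have $\deg\ol{P_0}\le\deg P<\deg\ol{\ph_i}$ with $\ol{P_0}\ne 0$, so $\ol{\ph_i}\nmid\ol{P_0}$. The main obstacle is the henselian matching step in (i) — establishing cleanly that the irreducible factorization of $f$ in $K^h[x]$ is in bijection with the prime-power factorization of $\ol f$ in $k_\nu[x]$, with matching prime appearing to exponent $l_i$; once this bijection is in hand, the remaining parts are direct computations with the formula $\om_j(P(\al))=\ol{\nu^h}(P(\al_j))$.
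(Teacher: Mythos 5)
Your parts (ii) and (iii) are correct and essentially the paper's argument: for (ii) you bound coefficientwise using $\ol{\nu^h}(\al_j)\ge 0$ (the paper instead notes $P_0(\al)\in R_L=\bigcap_j R_{\om_j}$, same content), and for (iii) your evaluation at the residue of $\al_j$ is the paper's kernel-of-the-reduction-homomorphism argument in different words; the preliminary Gauss-lemma step placing each $f_j$ in $R_{\nu^h}[x]$ is a sensible addition that the paper leaves implicit.

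The genuine flaw is in (i), precisely at the step you single out as the main obstacle. It is true, and is what the paper uses, that each $\ol{f_j}$ must be a power of a single $\ol{\ph_i}$: otherwise a nontrivial coprime factorization of $\ol{f_j}$ would lift by Hensel's lemma over the henselian field $K^h$, contradicting irreducibility of $f_j$. But your further claims that distinct $j$ give distinct $i$, hence $t=r$ and, after reindexing, $\ol{f_j}=\ol{\ph_j}^{l_j}$, are false in general. Take $K=\mathbb{Q}$ with the $2$-adic valuation and $f=x^2+7$: since $-7\equiv 1\pmod 8$, $f$ splits into two distinct monic linear factors over the henselization, so $t=2$, while $\ol{f}=(x+1)^2$ in $\mathbb{F}_2[x]$, so $r=1$ and both $\ol{f_1},\ol{f_2}$ reduce to the same $\ol{\ph_1}=x+1$. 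In general one only gets a surjection $j\mapsto i$, so $t\ge r$; equality $t=r$ is exactly what the paper proves later (Theorem \ref{main 2}) under the additional hypothesis that $R_\nu[\al]$ is integrally closed, so it cannot be available at the level of this lemma. Fortunately (i) does not need your bijection: since $\prod_{j}\ol{f_j}=\ol{f}=\prod_i\ol{\ph_i}^{l_i}$, every $\ol{\ph_i}$ divides some $\ol{f_j}$, which is then a power of $\ol{\ph_i}$, and taking a root $\al_j$ of that $f_j$ gives $\ol{\nu^h}(\ph_i(\al_j))>0$, i.e.\ $\om_j(\ph_i(\al))>0$. Replacing your injectivity/counting claim by this surjectivity statement — which is exactly how the paper argues — repairs the proof.
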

\begin{proof}\hfill
\begin{itemize}
\item[(i)]
    Let $f=\prod_{j=1}^t {f_j}$ be the monic irreducible factorization of $f$ in $K^h[x]$. Since
    $\prod_{i=1}^r \ol{\phi_i}^{l_i} =\prod_{j=1}^t \ol{f_j}$, for every  fixed $i=1, \dots, r$ there is some $j=1, \dots, t$ such that $\ol{\phi_i}$ divides $\ol{f_j}$. Since $f_j$ is irreducible, it follows from Hensel's Lemma that $\ol{f_j}=\ol{\phi_i}^{s_i}$ for some $1\leq s_i \leq l_i$. Let $\al_j\in \ol{K^h}$ be a root of $f_j$. As $f_j(\al_j)=0$, we have $\ol{\phi_i(\al_j)}^{s_j} = \ol{f_j(\al_j)} =\ol{0}$ modulo $M_{\ol{\nu^h}}$. Thus, $\phi_i(\al_j)^{s_i}\in M_{\ol{\nu^h}}$ and so $\phi_i(\al_j)\in M_{\ol{\nu^h}}$. Thus, by Theorem \ref{Endler}, $\om_j(\phi_i(\al))=\ol{\nu^h}(\phi_i(\al_j))>0$ as desired.
\item[(ii)] Let $P_0=P/a$, where $a\in K$  and  $\nu^h(a)=\nu^h(P)$. As $\nu(P_0)=\nu^{h}(P_0)=0$, $P_0 \in R_\nu[x]$. Since $R_L = \bigcap_{j=1}^t R_{\om_j}$ (see \cite[Corollary 3.1.4]{En}), it follows that, for every $j=1, \dots, t$, we have $P_0(\al)\in R_\nu[\al]\subseteq R_L\subseteq R_{\om_j}$ and $$\qquad \quad \om_j(P(\al))=\om_j(a)+\om_j(P_0(\al))=\nu^h(a)+\om_j(P_0(\al))=\nu^{h}(P)+\om_j(P_0(\al)) \geq \nu^{h}(P).$$
\item[(iii)] Define the map $\psi: k_\nu[x] \to R_{\om_j}/M_{\om_j}$ by $\overline{P} \mapsto P(\al)+ M_{\om_j}$. Since $M_\nu \subseteq M_{\om_j}$, $\psi$ is a well-defined ring homomorphism. As $\om_j(P(\al))=\nu^{h}(P) +\om_j(P_0(\al))$ (see part (ii)), it follows that $\om_j(P(\al))=\nu^{h}(P)$ if and only if $\om_j(P_0(\al))=0$, if and only if $P_0(\al)\in R_{\om_j} - M_{\om_j}$, if and only if $\overline{P_0} \not\in \mbox{ker}\,\psi$.
As $\om_j(\phi_i(\al))>0$, $\phi_i(\al)\in M_{\om_j}$ and so
$\overline{\phi_i}(X) \in \mbox{ker}\,\psi$.
Since  $\overline{\phi_i}$
 is irreducible over $k_\nu$, $\mbox{ker}\,\psi$ is the principal ideal of   $k_\nu[x]$  generated by $\overline{\phi_i}$.
  It now follows that $\om_j(p(\al))=\nu^{h}(P)$ if and only if
$\overline{\phi_i}(X)$ does not divide $\overline{P_0}(X)$.
\end{itemize}
\end{proof}
\begin{lem}\label{inf}
Let {$f\in R_{\nu^h}[x]$} be a monic irreducible polynomial and $L=K^h(\al)$, where $\al\in \ol{K}$ is a root of {$f$}. Assume that  {$\ol{f}=\ol{\ph}^l$} for some monic polynomial $\ph\in R_{\nu^h}[x]$, whose reduction modulo $M_{{\nu^h}}$, is irreducible in $k_\nu[x]$. Let  {$q, r\in R_{\nu^h}[x]$} be, respectively, the quotient and the remainder upon the euclidean division of {$f$}
by $\phi$. Then
 $R_{\nu^h}[\al]$ is integrally closed if and only if $l=1$ or $\Ga_\nu^+$ has a minimal element $\s$ and $\nu^h(r)=\s$.
 \end{lem}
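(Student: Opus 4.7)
The plan is to reduce the claim to showing $R_{\nu^h}[\al] = R_\om$, where $\om$ is the unique extension of $\nu^h$ to $L$ (which exists by henselianness, and whose valuation ring is the integral closure of $R_{\nu^h}$ in $L$). I would work throughout with the $\phi$-adic expansion $f = \phi^l + \sum_{i=0}^{l-1} a_i \phi^i$ (each $\deg a_i < \deg\phi$): by construction $a_0 = r$, while $\ol f = \ol\phi^l$ forces $\nu^h(a_i) > 0$ for every $i < l$, and Lemma~\ref{lemma}(iii) gives $\om(a_i(\al)) = \nu^h(a_i)$ for each $i$. The case $l = 1$ then follows at once: since $\ol f = \ol\phi$ is irreducible, Lemma~\ref{lemma}(iii) applied to any $P \in K^h[x]$ with $\deg P < \deg f = \deg\phi$ yields $\om(P(\al)) = \nu^h(P)$, so $P(\al) \in R_\om \Leftrightarrow \nu^h(P) \geq 0 \Leftrightarrow P(\al) \in R_{\nu^h}[\al]$, giving $R_{\nu^h}[\al] = R_\om$.

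For the case $l \geq 2$, I would argue sufficiency first. Assume $\sigma = \min \Ga_\nu^+$ exists and $\nu^h(r) = \sigma$, so $\Ga_\nu = \Z\sigma$. Setting $t := \om(\phi(\al)) > 0$ (Lemma~\ref{lemma}(i)) and reading $f(\al) = 0$ as $\phi(\al)^l = -\sum_{i<l} a_i(\al)\phi(\al)^i$, the $i = 0$ term has valuation $\sigma$ while each other term has valuation $\geq \sigma + it > \sigma$; hence the right-hand side has valuation exactly $\sigma$, so $lt = \sigma$ and $\om(\phi(\al)) = \sigma/l$. For any $x \in R_\om$, expand $x = \sum_{i<l} p_i(\al)\phi(\al)^i$ in its $\phi$-adic form with $\deg p_i < \deg\phi$. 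By Lemma~\ref{lemma}(iii) each nonzero term has valuation $\nu^h(p_i) + i\sigma/l$; these lie in distinct cosets of $\Z\sigma$ inside $\Z(\sigma/l)$, so $\om(x) = \min_i(\nu^h(p_i) + i\sigma/l) \geq 0$ forces $\nu^h(p_i) \geq 0$ (since $-i\sigma/l \in (-\sigma,0]$ and $\nu^h(p_i) \in \Z\sigma$). Hence $p_i \in R_{\nu^h}[x]$ and $x \in R_{\nu^h}[\al]$.

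For necessity (still $l \geq 2$), assume $R_{\nu^h}[\al] = R_\om$ and argue by contradiction, using that any element of $R_{\nu^h}[\al]$ must have all coordinates in $R_{\nu^h}$ relative to the $R_{\nu^h}$-basis $\{\al^j\phi(\al)^i : 0 \leq j < \deg\phi,\ 0 \leq i < l\}$. If $\Ga_\nu^+$ has no minimum, I would combine density of $\Ga_\nu^+$ with the finiteness of $[\Ga_\om : \Ga_\nu]$ to produce $c \in K^h$ with $0 < \nu^h(c) < \om(\phi(\al))$; then $\phi(\al)/c \in R_\om$ whereas its coordinate $1/c$ is not in $R_{\nu^h}$. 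If instead $\sigma = \min\Ga_\nu^+$ exists but $\nu^h(r) = k\sigma$ with $k \geq 2$, I would invoke the single-slope property of the $\phi$-adic Newton polygon of the irreducible polynomial $f$ (Ore's theorem) to get $\om(\phi(\al)) = k\sigma/l$, choose an integer $0 < m < l$ with $mk \geq l$ and $\pi \in K^h$ with $\nu^h(\pi) = \sigma$, and observe that $\phi(\al)^m/\pi \in R_\om$ while its coordinate $1/\pi$ is not in $R_{\nu^h}$.

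The main obstacle is the necessity direction in the no-minimum case: producing $c$ with $0 < \nu^h(c) < \om(\phi(\al))$ when $\om(\phi(\al))$ is not in $\Ga_\nu$. This reduces to a short ordered-group observation: if $\Ga_\nu^+$ has no minimum and $e = [\Ga_\om : \Ga_\nu]$, then $et \in \Ga_\nu^+$ for any $t \in \Ga_\om^+$, and density applied to $et$ yields $\gamma \in \Ga_\nu^+$ with $e\gamma < et$, whence $\gamma < t$. A secondary point is the single-slope property invoked in the other necessity subcase, which can either be quoted from Ore's theorem or re-derived quickly from the irreducibility of $f$ and the correspondence between Newton polygon segments of $f$ and its monic factors in $K^h[x]$.
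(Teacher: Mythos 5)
Your overall plan (reduce everything to comparing $R_{\nu^h}[\al]$ with the valuation ring $R_\om$ of the unique extension $\om$, treat $l=1$ via Lemma~\ref{lemma}, extract $l\,\om(\ph(\al))=\s$ from $f(\al)=0$, and detect failure of integral closedness by exhibiting elements such as $\ph(\al)^m/b$ whose $\ph$-adic coordinates leave $R_{\nu^h}$) is sound and, in the $l=1$ case and the sufficiency direction, essentially parallels the paper. But there is a genuine error: from the existence of a least positive element $\s$ of $\Gamma_\nu^+$ you infer ``so $\Gamma_\nu=\Z\s$'', and later in the necessity argument you write $\nu^h(r)=k\s$ with an integer $k\ge 2$. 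This is false for value groups of rank greater than one: $\Z\times\Z$ with the lexicographic order has least positive element $(0,1)$ yet is not cyclic, and $\nu^h(r)$ could equal $(1,0)$, which is no integer multiple of $\s$. Since the lemma is stated for arbitrary valued fields (this generality is the point of the paper), every step leaning on discreteness is affected: the ``distinct cosets of $\Z\s$ inside $\Z(\s/l)$'' computation and the assertion $\nu^h(p_i)\in\Z\s$ in your sufficiency argument, and the whole parametrization ``$\nu^h(r)=k\s$, choose $m$ with $mk\ge l$'' in your necessity subcase (b). These are repairable using only minimality of $\s$: the classes $i\s/l \pmod{\Gamma_\nu}$, $0\le i<l$, are distinct because $0<(j-i)\s/l<\s$ cannot lie in $\Gamma_\nu$; any $\gamma\in\Gamma_\nu$ with $\gamma>-\s$ satisfies $\gamma\ge 0$; and in (b), minimality forces $\nu^h(r)\ge 2\s$, after which $m=l-1$ works since $2(l-1)\ge l$. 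But as written the argument rests on a false structural claim.

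Two further soft spots in the necessity direction. In subcase (a), ``density'' literally produces some positive $\gamma<et$, which does not give $e\gamma<et$; you need the (true, but not one-line) fact that an ordered group whose positive cone has no least element contains, for every $\beta>0$ and every integer $n\ge 1$, some $\gamma>0$ with $n\gamma<\beta$ — this requires a short halving iteration, so prove it rather than wave at density. In subcase (b) you import the one-sidedness of the $\ph$-Newton polygon of an irreducible polynomial over a henselian field (Ore); besides being an external ingredient usually stated for rank-one value groups, it is avoidable: once you know $\nu^h(a_i)\ge\s$ for $i\ge1$ and $\nu^h(a_0)\ge 2\s$, comparing the terms of $\ph(\al)^l=-\sum_{i<l}a_i(\al)\ph(\al)^i$ (a sum of elements cannot vanish if one term has strictly smallest value) already yields $(l-1)\om(\ph(\al))\ge\s$, which is all that $\ph(\al)^{l-1}/\pi$ needs. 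For comparison, the paper's necessity argument handles both subcases at once with the single test element $\ph(\al)^{l-1}/b$, choosing $\nu^h(b)$ as a minimum of two quantities, and never needs the exact value of $\om(\ph(\al))$; with the fixes above your route also goes through, but the discreteness assumption is a real gap in the proposal as it stands.
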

 \begin{proof}
 {To begin with, recall that $(K,\nu)$ and $(K^h,\nu^h)$ have the same value group and the same residue field.
Assume that $l=1$ or $\Ga_\nu^+$ has a minimal element $\s$ with $\nu^h(r)=\s$, we seek to show that  $R_{\nu^h}[\al]$ is integrally closed in either case.}
\begin{itemize}
\item [(1)]
Assume that $l=1$. An arbitrary element of $L$ is on the form  $\t=P(\al)/b$ for some $b\in R_{\nu^h}$ and $P\in R_{\nu^h}[x]$, with {$\mbox{deg}(P)<\mbox{deg}(f)$.
Since  $\mbox{deg}(P)<\mbox{deg}(\ol{f})$} and $l=1$, $\ol{\phi}$ does not divide $\ol{P}$. { Thus by Lemma \ref{lemma}, $\om(P(\al))=\nu^h(P)$ and   $\om(\phi(\al))>0$},  { where $\om$ is the unique valuation of $L$ extending $\nu^h$}.
It follows that { if $\t\in R_L$, then   $\om(\t)\ge 0$ and so, $0\le \nu^h(b)\le \om(P(\al))=\nu^h(P)$}. Hence $b$ divides  $P$ in $R_{\nu^h}[x]$, and $\t \in R_{\nu^h}[\al]$. Thus $R_{\nu^h}[\al]$ is integrally closed in this case.
\item [(2)]
Now assume that  $l\ge 2$. { We show that if  $\nu^h(r)$ is not the } minimal element of $\Ga_\nu^+$, then $\t=\frac{\ph(\al)^{l-1}}{b}\in R_L$ for an adequate element $b\in R_{\nu^h}$ with $\nu^h(b)>0$ and  so $\t\not \in R_{\nu^h}[\al]$ (because  as $\ph$ is monic, $\frac{\ph(x)^{l-1}}{b}\not\in R_{\nu^h}[x]$). Let {$f(x)=a_{l}(x)\phi^{l}(x)+\cdots+a_0(x)$}  be the $\ph$-expansion of $f$.  Since  $\ol{f}=\ol{\ph}^l$,  {$a_l=1$ and} for every $i=0,\dots,l-1$,   $\ol{a_i}=0$, $\nu^h(a_i)>0$, and $\nu^h(a_i)\in \Ga_\nu^+$.
   Let $\tau=\mbox{ min }(\nu^h(a_i),\,i=0,\dots,l-1)$, {$v\in \Ga_\nu^+$ with $v<\nu^h(r)$,
$\d=\min \{\tau, \nu^h(r)-v\}$,    $b\in R_{\nu^h}$ be such that $\nu^h(b)=\d$, and
$\t=\frac{\ph(\al)^{l-1}}{b}$}.
 To conclude that $\t \in R_L$, it suffices to show that $\om(\t)\ge 0$ for the unique valuation  $\om$ of $L$ extending $\nu^h$.
 { Recall that as  for every $i=0,\dots,l-1$, deg$(a_i)<$deg$(\ph)$,  by Lemma \ref{lemma},  $\om(a_i(\al))=\nu^h(a_i)\ge \s$. As $r=a_0$, $\nu^h(r)\in \Ga_\nu^+$}.
There are two cases:  The first one   $\om(\phi(\al))> \d$,    $\om(\ph(\al)^{l-1})> (l-1)\d\ge \d$ (because $l\ge2$). Thus $\om(\t)\ge 0$ and $\t\in R_\om$.\\
For the second one; $\om(\phi(\al))\leq \d$. As {$\t=-(\frac{a_{l-1}(\al)}{b}\ph^{l-2}(\al)+\cdots+\frac{a_1(\al))}{b}-\frac{a_{0}(\al)}{b\ph(\al)})$} and  $\nu^h(b)=\d\le \tau \le \nu^h(a_i)=\om(a_i(\al))$ for every $i=1,\dots,l-1$, $\om(\t)\ge 0$ if and only if
$\om(\frac{a_{0}(\al)}{b\ph(\al)})\ge 0$.  As $\d\le \tau$ and  $\d\le  \nu^h(r)-v$, $2\d\le  \nu^h(r)$ and $\om(\frac{a_{0}(\al)}{b\ph(\al)})\ge \nu^h(r)- \d-\om(\ph(\al))\ge \nu^h(r)- 2\d\ge 0$.   {In both cases $\om(\t)\ge 0$. Thus} $\t\in R_L$.
\end{itemize}
Conversely,  assume that $l\ge 2$ and {$\s=\mbox{ min }(\Ga_\nu^+)=\nu^h(r)$.   We claim that  $\om(\ph(\al))=\frac{\s}{l}$, where $\om$ is the unique valuation of $L$ extending $\nu$ (by Lemma \ref{lemma}). Indeed, as $\om(\ph(\al))>0$, and for every $i=1,\dots, l-1$, $\om(a_i(\al))=\nu^h(a_i)\ge \s$, we have $\s=\nu^h(r)=\nu^h(a_0)=\om(a_0(\al))$ and so, $\om(a_i(\al)\ph^i(\al))>\s=\om(a_0(\al))$. On the other hand,  $\ph^l(\al)+{a_{l-1}(\al)}\ph^{l-2}(\al)+\cdots+{a_1(\al))}=-{a_{0}(\al)}=-r(\al)$ implies that  $\om(\ph^l(\al))=\om(a_0(\al))=\s$. Thus  $\om(\ph(\al))=\frac{\s}{l}$ as desired}.\\
Let $P\in R_{\nu^h}[x]$ with $\nu^h(P)=0$,  {$\mbox{deg}(P)<\mbox{deg}(f)$, $\t=\frac{P(\al)}{b}$, and show that if $\t\in R_\om$, then $\t\in R_{\nu^h}[\al]$}. Let $u=V_{\ol{\ph}}(\ol{P})$ be the $\ol{\ph}$-adic valuation of $\ol{P}$. { If $u=0$, then $\ol{\ph}$ does not divide $\ol{P}$, and thus $\om(P(\al))=\nu^h(P)=0$. Otherwise,} as $\mbox{deg}(P)<\mbox{deg}(f)$, then $u <l$ and  $P=Q\ph^u+T$, where  $\ol{\ph}$ does not divide $\ol{Q}$ and $\ol{T}=0$. So, $\nu^h(T)\ge \s$. Let $T(x)=\sum_{i=0}^sb_i(x)\ph^i(x)$ be the $\ph$-expansion of $T$.
{As $\ol{T}=0$, for every $i=1,\dots,s$,
 $\ol{b_i}=0$ and $\nu^h(b_i)\ge \s$. As  $\om(\ph(\al))>0$,
then  for every $i=1,\dots,s$,
 $\om(b_i(\al)\ph^i(\al))>\s$. Thus
   $\om(T(\al))\ge \om(b_0(\al))=\nu^h(b_0))\ge \s$}.
 Also {as $u=V_{\ol{\ph}}(\ol{P})$, $\ol{\ph}$ does not divide  $\ol{Q}$ and so by Lemma \ref{lemma}, $\om(Q(\al))=\nu^h(Q)=0$ (because $Q$ is monic)}.
Thus $\om(Q(\al)\ph^u(\al))=u\om(\ph(\al))=\frac{u\s}{l}<\s$, then $\om(P(\al))=\frac{u\s}{l}<\s$.\\
 {It follows that in both cases, for every $P\in R_{\nu^h}[x]$ of degree less than degree$(f)$, if $\nu^h(P)=0$, then $\om(P(\al))=\frac{u\s}{l}<\s$.
Hence if $\t \in R_L=R_\om$, then $\nu^h(b)\le \om(P(\al))<\s$. Thus $\nu^h(b)=0$; $b$ is invertible in $R_{\nu^h}$. Hence $b$  divides $P$ in $R_{\nu^h}[x]$} and $R_L=R_{\nu^h}[\al]$ as claimed.
\end{proof}
\begin{cor}\label{phi}
{Under the hypotheses and notations of Lemma \ref{inf}, if $l\ge 2$ and $R_{\nu^h}[\al]$ is integrally closed, then $\Ga^+$ has a minimal element $\s$ and $\om(\ph(\al))=\frac{\s}{l}$, where $\om$ is the unique valuation of $K^h(\al)$  extending $\nu^h$.}
\end{cor}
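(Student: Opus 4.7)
The plan is to read this corollary directly off Lemma \ref{inf} and its proof. The equivalence in Lemma \ref{inf} has the form ``$R_{\nu^h}[\al]$ is integrally closed $\iff$ $l=1$ or $(\Ga_\nu^+$ has a minimum $\s$ and $\nu^h(r)=\s)$''. Since we are given $l\ge 2$ and $R_{\nu^h}[\al]$ integrally closed, the only-if direction forces the second disjunct, producing $\s=\min(\Ga_\nu^+)$ with $\nu^h(r)=\s$. This already handles the existence part of the conclusion.

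For the equality $\om(\ph(\al))=\s/l$, I would reproduce the short computation that is embedded in the converse direction of the proof of Lemma \ref{inf}. Write the $\ph$-expansion
$$f=\ph^l+a_{l-1}\ph^{l-1}+\cdots+a_0,$$
with $a_i\in R_{\nu^h}[x]$ of degree strictly less than $\mbox{deg}(\ph)$, $a_0=r$, and $a_l=1$. From $\ol{f}=\ol{\ph}^l$ we read $\ol{a_i}=0$ for $0\le i\le l-1$, so $\nu^h(a_i)>0$; by minimality of $\s$ this upgrades to $\nu^h(a_i)\ge\s$. Since $\mbox{deg}(a_i)<\mbox{deg}(\ph)$, Lemma \ref{lemma}(iii) yields $\om(a_i(\al))=\nu^h(a_i)\ge\s$, and in particular $\om(a_0(\al))=\nu^h(r)=\s$.

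Now use $f(\al)=0$ to write
$$\ph(\al)^l=-a_0(\al)-\sum_{i=1}^{l-1}a_i(\al)\ph(\al)^i.$$
By Lemma \ref{lemma}(i), $\om(\ph(\al))>0$ (and $\om$ is the unique extension of $\nu^h$ to $K^h(\al)$). Consequently, for each $i\ge 1$,
$$\om\bigl(a_i(\al)\ph(\al)^i\bigr)=\om(a_i(\al))+i\,\om(\ph(\al))>\om(a_i(\al))\ge\s=\om(a_0(\al)),$$
so the ultrametric inequality forces $\om(\ph(\al)^l)=\om(a_0(\al))=\s$, i.e.\ $\om(\ph(\al))=\s/l$.

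I do not expect any substantive obstacle: the corollary is essentially a restatement of what is proved along the way in Lemma \ref{inf}. The only point deserving care is the strict inequality $\om(a_i(\al)\ph(\al)^i)>\s$ for $i\ge 1$, which is what collapses the ultrametric sum onto the two extreme terms $\ph(\al)^l$ and $a_0(\al)$; this strictness uses exactly the hypotheses $l\ge 2$ and $\om(\ph(\al))>0$.
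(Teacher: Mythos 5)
Your proposal is correct and follows essentially the same route as the paper: invoke the only-if direction of Lemma \ref{inf} to obtain $\s=\min(\Ga_\nu^+)$ with $\nu^h(r)=\s$, then take the $\ph$-expansion of $f$, bound the middle terms strictly above $\s$ using $\om(\ph(\al))>0$ and $\nu^h(a_i)\ge\s$, and conclude $\om(\ph(\al)^l)=\om(a_0(\al))=\s$ from $f(\al)=0$ by the ultrametric (two-smallest-terms) argument. This matches the paper's proof of the corollary step for step.
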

\begin{proof}
{Suppose that $R_{\nu^h}[\al]$ is integrally closed.
Since  $l\ge 2$, it follows from  Lemma \ref{inf} that  $\Ga_\nu^+$ has a minimal element $\s$ with $\s=\nu^h(r)$. we show that   $\om(\ph(\al))=\frac{\s}{l}$. Let $f=\ph^l+b_{l-1}\ph^{l-1}+\dots+b_0$ be the $\ph$-expansion of $f$. Then $r=b_0$. As $\ol{f}=\ol{\ph}^l$, $\ol{b_i}=0$  and so $\nu^h(b_i)\ge \s$  for every $i=0,\dots,l-1$. By Lemma \ref{lemma}, $\om(\ph(\al))>0$. Thus, $\om({b_i}(\al)\ph(\al)^i)>\s$  for every $i=1,\dots,l-1$. It follows that $\mbox{ min }\{\om({b_i}(\al)\ph(\al)^i),\, i=0,\dots,l\}=\mbox{min}(\om({b_0}(\al)),l\om(\ph(\al))$. If $\om({b_0}(\al))\neq l\om(\ph(\al))$, then $\om(f(\al))=\mbox{min}(\om({b_0}(\al)),l\om(\ph(\al))$, which is impossible because $f(\al)=0$. So, $l\om(\ph(\al)=\om({b_0}(\al))=\om(r(\al))=\nu^h(r)=\s$ (by Lemma \ref{lemma})}.
 \end{proof}

Now we get to our second main result, which  does not require the separability of the extension required in \cite[Theorem 2.5]{DBE} and also improves  \cite[Theorem 4.1]{KK1} in the sense that the base field is not assumed to be Henselian.
Recall that in 2006, Ershov generalized the Dedekind's criterion to any arbitrary rank (see \cite{Er}).  In 2010, Khanduja and  Kumar gave a new  proof to Ershov's result in \cite{KK1}. Also in 2021, Jakhar and Khanduja gave a similar proof to Ershov's result in \cite{KK2}.
But the proofs of Theorem 1.1 in  \cite{KK1, KK2} are based on  the correspondence given in \cite[17.17]{En}, which assumes that this correspondence is valid under the condition of separability of the fields extension. So we taught that these proofs are at least incomplete. In this paper, based on the remainders of the Euclidean division of $f(x)$ by some monic polynomials, we propose a new version of Dedekind's criterion for arbitrary rank valued fields. 
 \begin{thm}\label{main}
{ Let  $L=K(\al)$ be the simple extension of $K$ generated by $\al\in\overline{K}$ a root of a monic irreducible polynomial $f\in R_\nu[x]$     and assume that $\ol{f}=\prod_{i=1}^r \ol{\phi_i}^{l_i}$  is  the factorization of $\ol{f}$ into powers of monic irreducible polynomials in  $k_\nu[x]$, where  for every   $i=1, \dots, r$, $\phi_i\in R_\nu[x]$ is a monic lifting of $\ol{\phi_i}$. Let $I=\{1\leq i \leq r\,|\, l_i\geq 2\}$ and for every $i\in I$, let $(q_i,r_i)$ be the quotient and the remainder of the euclidean division of $f(x)$  by $\ph_i(x)$. Then  the following statements are equivalent:
\begin{itemize}
\item[(i)]
$R_\nu[\al]$ is integrally closed.
\item[(ii)]  Either $I=\emptyset$  or  $\nu(r_i)=\min(\Ga_\nu^+)=\s$ for every $i\in I$.
\end{itemize}}
\end{thm}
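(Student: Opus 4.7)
The plan is to translate the integral closedness of $R_\nu[\al]$ into a condition on the valuations $\om_1,\dots,\om_t$ of $L$ extending $\nu$ furnished by Theorem~\ref{Endler}, and then combine Lemma~\ref{lemma} with the local henselian characterization of Lemma~\ref{inf} and Corollary~\ref{phi} applied to each irreducible factor $f_j$ of $f$ over $K^h$. Writing an arbitrary element of $L$ as $P(\al)/b$ with $P\in R_\nu[x]$, $\deg P<\deg f$, $\nu(P)=0$, $b\in R_\nu$, the equality $R_\nu[\al]=R_L=\bigcap_j R_{\om_j}$ is equivalent to: no $b\in R_\nu$ with $\nu(b)>0$ satisfies $\nu(b)\leq \min_j\om_j(P(\al))$ for any such $P$. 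By Hensel's lemma, each $\bar{f_j}=\bar{\phi_{i(j)}}^{s_j}$ for some $i(j)\in\{1,\dots,r\}$ with $l_i=\sum_{j:\,i(j)=i}s_j$, so Lemma~\ref{inf} governs each henselian factor $R_{\nu^h}[\al_j]$.

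For (ii)$\Rightarrow$(i): if $I=\emptyset$, then by Lemma~\ref{lemma}(iii) and a degree count some $\bar{\phi_i}$ fails to divide $\bar P$, and any $j$ with $i(j)=i$ gives $\om_j(P(\al))=0$. Assume now $I\neq\emptyset$, $\sigma=\min\Gamma_\nu^+$ exists, and $\nu(r_i)=\sigma$ for every $i\in I$. The identity $r_i\equiv\prod_j(f_j\bmod\phi_i)\pmod{\phi_i}$, combined with the fact that reducing modulo a monic polynomial does not decrease the Gaussian $\nu^h$-valuation and with Gauss's lemma, yields
\[\sigma\;=\;\nu(r_i)\;\geq\;\sum_{j:\,i(j)=i}\nu^h(f_j\bmod\phi_i)\;\geq\;\bigl|\{j:i(j)=i\}\bigr|\cdot\sigma,\]
forcing exactly one $j_0=j_0(i)$ with $i(j_0)=i$, $s_{j_0}=l_i$, and $\nu^h(f_{j_0}\bmod\phi_i)=\sigma$. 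Lemma~\ref{inf} then gives that $R_{\nu^h}[\al_{j_0}]$ is integrally closed, and Corollary~\ref{phi} supplies $\om_{j_0}(\phi_i(\al))=\sigma/l_i$. For a given $P$ with $\nu(P)=0$, $\deg P<n$, a degree count produces some $i_0$ with $u:=V_{\bar{\phi_{i_0}}}(\bar P)<l_{i_0}$; if $u=0$ any $j$ with $i(j)=i_0$ gives $\om_j(P(\al))=0$, and if $u\geq 1$ (so $i_0\in I$), the $\phi_{i_0}$-adic split $P=Q\phi_{i_0}^u+T$ with $\bar{\phi_{i_0}}\nmid\bar Q$, $\nu^h(Q)=0$, and $\nu^h(T)\geq\sigma$ yields $\om_{j_0(i_0)}(P(\al))=u\sigma/l_{i_0}<\sigma$ by the ultrametric inequality, which is the desired conclusion.

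For (i)$\Rightarrow$(ii) I would argue contrapositively. Fix $i_0\in I$; using $q_{i_0}(\al)=-r_{i_0}(\al)/\phi_{i_0}(\al)$ and Lemma~\ref{lemma}(iii) one finds $\min_j\om_j(q_{i_0}(\al))=\nu(r_{i_0})-\max_j\om_j(\phi_{i_0}(\al))>0$, and picking $b\in R_\nu$ with $0<\nu(b)\leq \min_j\om_j(q_{i_0}(\al))$ makes $\theta=q_{i_0}(\al)/b\in R_L\setminus R_\nu[\al]$ the desired witness; this already handles the case where $\sigma$ fails to exist and the case $\sigma<\min_j\om_j(q_{i_0}(\al))$. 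The remaining subcase is $\sigma$ exists but $\nu(r_{i_0})>\sigma$: here I would construct a refined witness by combining $\phi_{i_0}(\al)^{l_{i_0}-1}$ with the coefficients $a_k$ of the $\phi_{i_0}$-adic expansion $f=\sum_k a_k\phi_{i_0}^k$ of $f$, and using $f(\al)=0$ to gain positive valuation at every $\om_j$ simultaneously, in the spirit of the proof of Lemma~\ref{inf}. The main obstacle is exactly this last construction when $r\geq 2$: the naive henselian witness $\phi_{i_0}(\al)^{l_{i_0}-1}/b$ has $\om_j$-valuation $0$ at each $j$ with $i(j)\neq i_0$ and is therefore automatically integral there, so it cannot single-handedly witness non-integral-closedness of $R_\nu[\al]$; exploiting $f(\al)=0$ to gain positive $\om_j$-value across all places while keeping $\nu(P)=0$ for the numerator is where the bulk of the technical bookkeeping sits.
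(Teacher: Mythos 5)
Your reduction of integral closedness to the comparison of $\nu(b)$ with $\min_j\om_j(P(\al))$, and your proof of (ii)$\Rightarrow$(i), are sound and essentially follow the paper's route: the paper likewise picks $i$ with $V_{\ol{\ph_i}}(\ol{P})=k_i<l_i$, splits $P=Q_i\ph_i^{k_i}+T_i$, and uses $\om(\ph_i(\al))=\s/l_i$ together with Lemma \ref{lemma} to get $\om(P(\al))<\s$ and hence $\nu(b)=0$. Your Gauss-lemma estimate $\s=\nu(r_i)\ge\sum_{j:\,i(j)=i}\nu^h(f_j\bmod\ph_i)$, which forces a unique irreducible factor of $f$ over $K^h$ above each $\ph_i$ with $i\in I$ so that Lemma \ref{inf} and Corollary \ref{phi} become applicable, is in fact a welcome justification of a step the paper treats briskly (``it follows from Corollary \ref{phi} and its proof'').

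The genuine gap is in (i)$\Rightarrow$(ii), and you have located it yourself. The witness $q_{i_0}(\al)/b$ covers only the cases in which $\Ga_\nu^+$ has no least element (where you should also justify that some $b\in R_\nu$ with $0<\nu(b)\le\min_j\om_j(q_{i_0}(\al))$ exists, using that $\Ga_{\om_j}/\Ga_\nu$ is torsion) or $\s\le\min_j\om_j(q_{i_0}(\al))$; in the remaining subcase ($\s$ exists, $\nu(r_{i_0})>\s$, but $\om_j(\ph_{i_0}(\al))>\nu(r_{i_0})-\s$ for some $j$ with $\om_j(\ph_{i_0}(\al))>0$) no element of $R_L\setminus R_\nu[\al]$ is ever produced, and, as you observe, the henselian witness $\ph_{i_0}(\al)^{l_{i_0}-1}/b$ from Lemma \ref{inf} fails to be integral at the places lying over the other $\ph_k$. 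The paper never constructs a global witness: for (i)$\Rightarrow$(ii) it factors $f=\prod_{i}F_i$ over $K^h$ with $\ol{F_i}=\ol{\ph_i}^{l_i}$ by Hensel's lemma, notes that integral closedness gives $ind(f)=0$, and invokes the index formula of \cite{KK17} to see that a nontrivial factorization $F_i=F_{i1}F_{i2}$ would force $ind(F_i)\ge \nu^h(Res(F_{i1},F_{i2}))>0$; hence each $F_i$ is irreducible over $K^h$ with $ind(F_i)=0$, i.e.\ $R_{\nu^h}[\al_i]$ is integrally closed, and the converse half of Lemma \ref{inf} then yields both the existence of $\s$ and $\nu(r_i)=\s$ (the passage from the remainder of $F_i$ modulo $\ph_i$ to $r_i$ being the same product computation you already use in the other direction). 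Without this reduction to the henselization --- or some other mechanism transferring integral closedness of $R_\nu[\al]$ to the local rings $R_{\nu^h}[\al_i]$ --- your contrapositive argument does not close.
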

\begin{proof}
  As $\ol{f}=\prod_{i=1}^r\ol{\ph_i}^{l_i}$ is the factorization of $\ol{f}$ in $k_\nu[x]$, by Hensel's lemma, $f=\prod_{i=1}^rF_i$ in $R_{\nu^h}[x]$ such that for every $i=1,\dots,r$,  $\ol{F_i}=\ol{\ph_i}^{l_i}$.
For  $i=1, \dots, r$, let $\al_i\in{\ol{K}}$ be a root of $F_i$.
As $\ol{F_i}$ and $\ol{F_j}$ are coprime, $\nu^h(Res(F_i, F_j))=0$.  \\
$(i)\Longrightarrow (ii)$:
Assume that $R_\nu[\al]$ is integrally closed and $I\neq \emptyset$. We need to show that $\Gamma_\nu^+$ has a minimal element $\s$ and,  for every $i\in I$, $\nu(r_i)=\min(\Ga_\nu^+)$. Let $i\in I$.
If  $F_i$ factors in $R_{\nu^h}[x]$ as $F_i=F_{i1}F_{i2}$, where every $F_{ij}$ is not a constant polynomial then, as $\ol{F_{i1}}$ and $\ol{F_{i2}}$ are two powers of $\ol{\ph_i}$ and so, $\nu^h(Res(F_{i1}, F_{i2}))>0$. By using the index formula given in \cite{KK17}, $ind(F_i)\ge\nu^h(Res(F_{i1}, F_{i2}))>0$, which  is a contradiction because the assumption that $R_\nu[\al]$ is  integrally closed implies that
$ind(f)=0$. Hence for every $i=1,\dots,r$,  $F_i$ is irreducible over $K^h$ and  $ind(F_i)=0$.
By Lemma \ref{inf}, for every $i\in I$, $\nu(r_i)=\min(\Ga_\nu^+)$. \\
$(ii)\Longrightarrow (i)$: Assume that either $I=\emptyset$  or  $\nu(r_i)=\min(\Ga_\nu^+)$ for every $i\in I$. We show that $R_\nu[\al]$ is integrally closed. Let $\theta=P(\al)/b$, where $P\in R_\nu[x]$ is a  polynomial with $\deg(P)<\deg(f)$, $\nu(P)=0$, and $b\in R_\nu$.  We show that $\theta\in\R_L$ implies that $\nu(b)=0$, and so $b$ divides $P$ in $R_\nu[x]$ and thus $\theta \in R_\nu[\al]$. Since $\deg(P)< \deg(f)$, there exists $i=1,\dots,r$ such that $\nu_{\ol{\ph_i}}(\ol{P})=k_i<l_i$, where $\nu_{\ol{\phi_i}}$ is the $\ol{\phi_i}$-adic valuation.  By  Lemma \ref{lemma}, let $\om$ be a valuation  of $L$ extending $\nu$ such that $\om(\ph_i(\al))>0$. It follows that if $k_i=0$, then  $\ol{\ph_i}$ does not divide $\ol{P}$ and so by  Lemma \ref{lemma}, $\om(P(\al))=\nu^h(P)=\nu(P)=0$. If $\theta\in R_L$, then $\theta\in R_{\om}$, and so $0\le \om(\theta)=\om(P(\al))-\nu^h(b)$. Thus $\nu(b)=\nu^h(b)=0$.  In particular, if $l_i=1$, then $\nu(b)=0$ and $b$ divides $P$ in $R_\nu[x]$.
If $l_i\ge 2$ and  $k_i\ge 1$, then let $P=Q_i\ph_i^{k_i}+T_i$, with $Q_i, T_i\in R_\nu[x]$ such that $\ol{\ph_i}$ does not divide $\ol{Q_i}$ and $\nu(T_i)\ge \s$. Since $\nu(r_i)=\s$, it follows from Corollary \ref{phi} and its proof that $\om(\phi_i(\al))=\frac{\s}{l_i}$. Since $\om(Q_i(\al)\ph_i^{k_i}(\al))= \om(Q_i(\al))+k_i\om(\ph_i(\al))=0+\frac{k_i\s}{l_i}<\s$ and $\om(T_i(\al))\ge \nu^h(T_i)\ge  \s$, we get  $\om(P(\al))=\frac{k_i\s}{l_i}<\s$, and so $\om(P(\al))=0$. Therefore, $\nu(b)=\nu^h(b)=0$ and $b$ divides $P$ in $R_\nu[x]$.
\end{proof}
Our third main result, Theorem \ref{main 2} bellow gives  a {characterization of the integral closedness of $R_\nu[\al]$ via some information on distinct valuations of $L$ extending $\nu$}. Remark that the statement of this theorem appears in  \cite[Theorem 1.3, p:17]{RH}. 
But as it was write before \cite[Theorem 1.3, p:17]{RH}, its  proof was based on the correspondence in [3,17.17], which   assume   that $L/K$ is a separable extension.  So this Theorem \ref{main 2} is an application of our  correspondence in Theorem \ref{Endler}, in which   the assumption of separability of the extension  is not required.
 \begin{thm}\label{main 2}
Keep the assumptions {and notations} of Theorem \ref{main}. The following statements
 are equivalent:
\begin{itemize}
\item[(i)] $R_\nu[\al]$ is integrally closed.
\item[(ii)]
 There are exactly $r$ distinct valuations $\om_1, \dots, \om_r$ of $L$ extending $\nu$,
and if $I\neq\emptyset$, then $\Gamma_\nu^+$ has a minimal element $\s$ and  for every $i\in I$,  $l_i\om_i(\phi_i(\al))=\s$, where $\om_i$ is a valuation satisfying
$\om_i(\phi_i(\al))>0$.
\end{itemize}
\end{thm}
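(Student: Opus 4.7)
The plan is to deduce the equivalence from Theorem \ref{main} by translating the condition $\nu(r_i)=\s$ on the remainder into the valuation-theoretic condition $l_i\om_i(\ph_i(\al))=\s$. The bridge is the $\ph_i$-expansion of $f$, combined with the identity $f(\al)=0$. First I would apply Hensel's lemma to obtain $f=\prod_{i=1}^r F_i$ in $R_{\nu^h}[x]$ with $\ol{F_i}=\ol{\ph_i}^{l_i}$, and use Theorem \ref{Endler} to identify the valuations of $L$ extending $\nu$ with the monic irreducible factors of $f$ over $K^h$. Thus ``exactly $r$ extensions of $\nu$ to $L$'' is equivalent to every $F_i$ being irreducible over $K^h$. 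As in the proof of Theorem \ref{main}, this irreducibility is forced by $R_\nu[\al]$ being integrally closed (via the index formula and $\mbox{ind}(f)=0$), and is part of the hypothesis in the converse direction.

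The key computation is to write the $\ph_i$-expansion $f=\sum_{k\ge 0}c_k(x)\ph_i(x)^k$ with $c_0=r_i$. Since $\ol{f}=\ol{\ph_i}^{l_i}\,h$ where $h=\prod_{j\neq i}\ol{\ph_j}^{l_j}$ is coprime to $\ol{\ph_i}$, one reads off that $\ol{c_k}=0$ for $k<l_i$ while $\ol{c_{l_i}}$ is coprime to $\ol{\ph_i}$. For any valuation $\om_i$ of $L$ extending $\nu$ with $\om_i(\ph_i(\al))>0$, Lemma \ref{lemma} then gives $\om_i(c_{l_i}(\al))=0$, $\om_i(c_0(\al))=\nu^h(r_i)$, and, when $\s=\min(\Ga_\nu^+)$ exists, strict inequalities $\om_i(c_k(\al)\ph_i(\al)^k)>\min\{\nu^h(r_i),\,l_i\om_i(\ph_i(\al))\}$ for $k\notin\{0,l_i\}$. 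Since $f(\al)=0$, the minimum $\om_i$-value among the summands has to be attained by at least two of them; when $l_i\ge 2$ the only such pair is $(k=0,\,k=l_i)$, forcing the identity $\nu^h(r_i)=l_i\om_i(\ph_i(\al))$.

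With this translation in hand, $(i)\Rightarrow(ii)$ follows by combining Theorem \ref{main}'s output $\nu(r_i)=\s$ (for $i\in I$) with the identity $\nu^h(r_i)=l_i\om_i(\ph_i(\al))$ just derived; and $(ii)\Rightarrow(i)$ reverses the process, using the hypothesis $l_i\om_i(\ph_i(\al))=\s$ to force $\nu(r_i)=\nu^h(r_i)=\s$, after which Theorem \ref{main} applies (the case $I=\emptyset$ being handled directly by Theorem \ref{main}). The main obstacle is the valuation bookkeeping for $k\notin\{0,l_i\}$, ensuring the strict inequalities so that the required cancellation in $f(\al)=0$ is unambiguously between the $k=0$ and $k=l_i$ terms; once this is settled, the rest is a clean application of Theorem \ref{Endler}, Lemma \ref{lemma}, and Theorem \ref{main}.
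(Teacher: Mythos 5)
Your overall strategy is the paper's: Hensel's lemma, Theorem \ref{Endler} and the index argument from the proof of Theorem \ref{main} handle the count of valuations, and the $\ph_i$-expansion of $f$ together with $f(\al)=0$ and Lemma \ref{lemma} translates between $\nu(r_i)=\s$ and $l_i\om_i(\ph_i(\al))=\s$. The genuine problem is in your ``key computation'': the strict inequalities $\om_i(c_k(\al)\ph_i(\al)^k)>\min\{\nu^h(r_i),\,l_i\om_i(\ph_i(\al))\}$ for $k\notin\{0,l_i\}$ do \emph{not} follow from the mere existence of $\s=\min(\Ga_\nu^+)$, so the identity $\nu^h(r_i)=l_i\om_i(\ph_i(\al))$ is false as an unconditional statement. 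For $0<k<l_i$ all you know is $\om_i(c_k(\al)\ph_i(\al)^k)\ge \nu^h(c_k)+k\om_i(\ph_i(\al))>\s$, which need not dominate the minimum of the two endpoint values when both $\nu^h(r_i)$ and $l_i\om_i(\ph_i(\al))$ exceed $\s$; the cancellation in $f(\al)=0$ can then occur between a middle term and the $k=l_i$ term. Concretely, take $f=x^2-px-p^4$, irreducible over $\q$, with $\nu=\nu_p$ ($p$ odd): here $\ol{f}=x^2$, $\ph_1=x$, $l_1=2$, $r_1=-p^4$, $\s=1$, and the Newton polygon over $\q_p$ gives two extensions with $\om_1(\al)=1$, $\om_2(\al)=3$, so $l_1\om_i(\al)\in\{2,6\}$ while $\nu^h(r_1)=4$; the minimum is attained by the $k=1$ and $k=2$ terms, not by $k=0$ and $k=2$.

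The repair is exactly the extra input each implication already provides, and once inserted your argument is correct: in $(i)\Rightarrow(ii)$ Theorem \ref{main} gives $\nu(r_i)=\s$, and in $(ii)\Rightarrow(i)$ the hypothesis gives $l_i\om_i(\ph_i(\al))=\s$; in either case $\min\{\nu^h(r_i),l_i\om_i(\ph_i(\al))\}\le\s$, every term with $0<k<l_i$ has value $>\s$ and every term with $k>l_i$ has value $>l_i\om_i(\ph_i(\al))$, so the ``minimum attained at least twice'' argument forces $\nu^h(r_i)=l_i\om_i(\ph_i(\al))=\s$. So state your bridge lemma with the hypothesis that one of the two quantities equals $\s$ (or fold the computation into each direction). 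With this fix, your $(ii)\Rightarrow(i)$ coincides with the paper's proof (the paper writes $f=Q\ph_i^{l_i}+h\ph_i+r_i$, i.e.\ your expansion with the middle terms grouped), while your $(i)\Rightarrow(ii)$ is a mild variant: the paper instead passes to $K^h(\al_i)$ for an irreducible factor $F_i$ and invokes Corollary \ref{phi} there, whereas you rerun the ultrametric argument directly in $L$, which is fine. Two small additional points: the resultant/index argument for irreducibility of $F_i$ over $K^h$ only applies when $l_i\ge 2$; for $l_i=1$ use that $\ol{F_i}=\ol{\ph_i}$ is irreducible. And in $(i)\Rightarrow(ii)$ make sure the identity is derived for \emph{every} valuation $\om_i$ with $\om_i(\ph_i(\al))>0$ (your in-$L$ argument does give this), since statement (ii) quantifies over such valuations.
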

\begin{proof}
 { By Hensel's lemma, let
 $f=F_1\cdots F_r$ in $R_{\nu^h}[x]$  { be} such that for every $i=1,\dots,r$,
$\overline{F_i}=\overline{\phi_i}^{l_i}$.\\
 $(i)\,\Longrightarrow\, (ii)$: Assume that $R_\nu[\al]$ is integrally closed and show that every $F_i$ is irreducible over $K^h$, from  which it would follow, {by Theorem \ref{Endler}, that $r=t$} and there are exactly $r$ distinct valuations $\om_1, \dots, \om_r$ of $L$ extending $\nu$. By the  argument presented in the proof  of Theorem \ref{main},  $ind(f)$ is well defined and equals  $0$. Thus for every $i=1,\dots,r$,    $ind(F_i)=0$ and so, as was shown in the proof of Theorem \ref{main}, if $i\in I$, then $F_i$ is irreducible over $K^h$.
  {If $i\not\in I$, then  $\ol{F_i}=\ol{\ph_i}$ is irreducible over $k_\nu$, and so  $F_i$ is irreducible over $K^h$.
   Therefore, there are exactly  $r$ distinct valuations  $\om_1,\dots,\om_r$ of $L$ extending $\nu$.
Moreover, if $I\neq\emptyset$, then for every  $i\in I$, let   $\al_i\in \ol{K^h}$ be a root of $F_i$ and $\om_i$  the valuation of $L$ associated to the irreducible factor $F_i$. Then $\om_i(\ph_i(\al))=\ol{\nu^h}(\ph_i(\al_i))=V_i(\ph_i(\al_i))$, where $V_i$ is the unique valuation of $K^h(\al_i)$}.
  As $R_\nu[\al]$ is integrally closed, $F_i$ is irreducible over $K^h$ and $R_{\nu^h}[\al_i]$ is integrally closed. Hence, by Corollary \ref{phi}, $\om_i(\ph_i(\al))=\frac{\s}{l_i}$ and $l_i\om_i(\ph_i(\al))=\s$.\\
   $(ii)\,\Longrightarrow\, (i)$:  By Theorem \ref{main}, it suffices to show that if $I\neq \emptyset$, then for every $i\in I$, $\nu(r_i)=\s$.
Assume that $I\neq \emptyset$ and for every $i\in I$, $l_i\om_i(\phi_i(\al))=\s$  is the minimum element of $\Ga_\nu^+$ when $\om_i$ satisfies
$\om_i(\phi_i(\al))>0$. For such an $i$,} let $f=Q\ph_i^{l_i}+h\ph_i+r_i$
 for some $Q,h\in R_\nu[x]$.  {As ${l_i}\ge 2$,   $\ol{\ph_i}$ does not divide $\ol{Q}$ and, by Lemma \ref{lemma},  $\om_i(Q(\al))=\nu(Q)=0$ and so $\om_i(Q(\al)\ph_i(\al)^{l_i})=\s$. Also, as $l_i\ge 2$,  $\ol{h}=0$ and so $\nu(h)\ge \s$. Thus,  $\om_i(h(\al)\ph_i(\al))>\om_i(h(\al))\ge \s$. So, $\om_i(Q(\al)\ph_i(\al)^{l_i})\neq \om_i(h(\al)\ph_i(\al))$. As $f(\al)=0$, $\nu(r_i)=\om_i(r_i(\al))=\mbox{min}(\om_i(Q(\al)\ph_i^{l_i}(\al)), \om_i(h(\al)\ph_i(\al)))=\s$.}
Therefore, $R_\nu[\al]$ is integrally closed.
\end{proof}
Under notations of Theorem \ref{main 2}, for any valuation $\om$ of
 $L$ extending $\nu$, we denote the ramification index $[\Ga_\om:\Ga_\nu]$ by $e(\om/\nu)$
 and the residue degree $[k_\om:k_\nu]$ by $f(\om/\nu)$.
In the following {remark}, we calculate  $e(\om/\nu)$ and $f(\om/\nu)$ for every  such   $\om$ when $R_\nu[\al]$ is integrally closed. In particular, if $\nu$ is a rank-one valuation, Theorem \ref{Dedek} is a  reformulation of Theorem \ref{main 2} by using prime ideal factorization instead of extension of valuations.
 \begin{rem}
 Under the hypothesis of Theorem \ref{main 2}, if $R_\nu[\al]$ is integrally closed, then there are exactly $r$ distinct valuations $\om_1,\dots, \om_r$ of $L$ extending $\nu$ and for every $i=1\dots,r$,  {$e(\om_i/\nu)= l_i$ and $f(\om_i/\nu)= m_i=$deg$(ph_i)$}. Indeed, let $i=1,\dots,r$. As $\frac{\s}{l_i}=\om_i(\ph_i(\al))$, $\frac{\s}{l_i}\in  \Ga_{\om_i}$. Let $\Ga_\nu(\frac{\s}{l_i})$ be the group generated by $\Ga_\nu$ and $\frac{\s}{l_i}$. Then the index $l_i=[\Ga_\nu(\frac{\s}{l_i}):\Ga_\nu]$   divides   $[\Ga_{\om_i}:\Ga_\nu]$ and so $e(\om_i/\nu)\ge l_i$. Moreover, let $\iota\,: k_\nu[x]\longrightarrow k_{\om_i}$ be the map defined by $\iota(\ol{P})= P(\al)+M_{\om_i}$, where $k_{\om_i}$ is the residue field of $\om_i$ and $M_{\om_i}$ is its  maximal ideal. Then the kernel of $\iota$ is generated by $\ph_i$. { Let $\F_{\ph_i}$ be the field    $\frac{ k_\nu[x]}{(\ol{\ph_i})}$. Then, } $k_{\nu}\subset \F_{\ph_i}\subset k_{\om_i}$. So,  $[\F_{\ph_i}:k_\nu]$ divides $f(\om_i/\nu)$. This means  that $m_i$ divides $f(\om_i/\nu)$  and so $f(\om_i/\nu)\ge m_i$. But as for every
 $i$, $l_i\cdot m_i = deg(F_i)$, we get the claimed equalities. We note here that the extension $L/K$ is defectless since $[L:K]=\sum_{i=1}^rl_im_i=\sum_{i=1}^re_if_i$.
 \end{rem}

 Let $R$ be a Dedekind domain, { $K$ the quotient field of $R$, } $L=K(\al)$  a finite  simple field  extension  {generated by a root  $\al\in \ol{K}$ of a monic irreducible polynomial  $f\in R[x]$}, and $R_L$  the integral closure of $R$ in $L$. Let $\p$ be a nonzero prime ideal of
 $R$, $R_\p$ the localization of $R$ at $\p$, and $\nu=\nu_\p$  the $\p$-adic discrete valuation of
$K$. Let $\ol{f}=\prod_{i=1}^r \ol{\phi_i}^{l_i}$ be the
 factorization of $\ol{f}$ into powers of distinct monic irreducible factors modulo $\p$,
 and $\phi_i\in R[x]$  a monic lifting of $\ol{\phi_i}$ for $i=1, \dots, r$. If $L/K$ is separable, then a well-known Theorem  of Dedekind says that if $\p$ does not divide the index ideal
$\mbox{ind}(\al)=[R_L:R[\al]]$, then $\p$ factors in $R_L$ as $\p R_L= \prod_{i=1}^r \Q_i^{l_i}$, where $\Q_i$ are distinct prime ideals of $R_L$ with $\Q_i=\p R_L+\phi_i(\al) R_L$ and
 $f(\Q_i/\p)=\mbox{deg}(\phi_i)$. Khanduja and Kumar gave the converse of Dedekind's theorem in \cite[Theorem 1.1]{KK8} under the  assumption that   $L/K$ is  separable.  Our fourth main result relaxes the separability of the extension $L/K$ required in both  Dedekind's theorem and  \cite[Theorem 1.1]{KK8}.
 \begin{thm}\label{Dedek}
Under the assumptions and notations  as above, $R_{\p}[\al]$ is integrally closed if and only if $\p R_L$ factors into powers of prime ideals of $R_L$ as $\p R_L=
\prod_{i=1}^r \Q_i^{l_i}$, where $\Q_1,\dots,\Q_r$ are the distinct prime ideals of $R_L$ lying above $\p$, {and} for every $i=1, \dots, r$,
$\Q_i=\p R_L+\phi_i(\al) R_L$.
\end{thm}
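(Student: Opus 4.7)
The plan is to reduce the claim to Theorem \ref{main 2} through the classical dictionary between prime ideals of $R_L$ lying above $\p$ and valuations of $L$ extending $\nu=\nu_\p$. Since $R$ is Dedekind and $L/K$ is finite, Krull--Akizuki ensures that $R_L$ is Dedekind, so $\p R_L$ factors uniquely as a product of powers of the primes of $R_L$ lying above $\p$. Under the dictionary, if a prime $\Q$ above $\p$ corresponds to a valuation $\om$ of $L$, then the $\Q$-adic exponent of $\p R_L$ equals $e(\om/\nu)$ and $v_\Q(x)=e(\om/\nu)\,\om(x)$ for every nonzero $x\in L$. Note also that since $\nu$ is discrete rank-one, $\Ga_\nu^+=\Z_{>0}$ has minimum $\s=1$.

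For the forward direction, suppose $R_\p[\al]$ is integrally closed. Theorem \ref{main 2} together with the remark following it furnishes exactly $r$ distinct valuations $\om_1,\dots,\om_r$ of $L$ extending $\nu$, with $e(\om_i/\nu)=l_i$ and with $l_i\om_i(\ph_i(\al))=1$ for every $i\in I$. Translating via the dictionary produces exactly $r$ primes $\Q_1,\dots,\Q_r$ of $R_L$ above $\p$ and yields the factorization $\p R_L=\prod_{i=1}^r\Q_i^{l_i}$. To establish $\Q_i=\p R_L+\ph_i(\al)R_L$, I would compare $v_{\Q_j}$-valuations on both sides for every prime $\Q_j$ of $R_L$. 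For primes $\Q'$ not above $\p$, both sides have valuation $0$; for $j\in\{1,\dots,r\}$, one has $v_{\Q_j}(\p R_L+\ph_i(\al)R_L)=\min(l_j,v_{\Q_j}(\ph_i(\al)))$. When $j=i$, $v_{\Q_i}(\ph_i(\al))=l_i\om_i(\ph_i(\al))$ equals $1$ if $l_i\geq 2$ (by Theorem \ref{main 2}), and is $\geq 1$ if $l_i=1$ (since then $\Ga_{\om_i}=\Z$ and $\om_i(\ph_i(\al))>0$), so the minimum is $1$ in either case. When $j\neq i$, applying Lemma \ref{lemma}(iii) to $\om_j$ (with its natural factor $\ol{\ph_j}$) and $P=\ph_i$, the fact that the distinct monic irreducibles $\ol{\ph_j}$ and $\ol{\ph_i}$ are coprime gives $\om_j(\ph_i(\al))=\nu(\ph_i)=0$, hence $v_{\Q_j}(\ph_i(\al))=0$. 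These valuations match $v_{\Q_j}(\Q_i)$ for every $j$, forcing $\Q_i=\p R_L+\ph_i(\al)R_L$.

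For the converse, assume the factorization $\p R_L=\prod_{i=1}^r\Q_i^{l_i}$ with $\Q_i=\p R_L+\ph_i(\al)R_L$ for every $i$. Then $\Q_1,\dots,\Q_r$ are precisely the primes of $R_L$ above $\p$, so the dictionary produces exactly $r$ distinct valuations $\om_1,\dots,\om_r$ of $L$ extending $\nu$, with $e(\om_i/\nu)=l_i$. For each $i\in I$, the generator relation gives $1=v_{\Q_i}(\Q_i)=\min(l_i,v_{\Q_i}(\ph_i(\al)))$, and since $l_i\geq 2$ this forces $v_{\Q_i}(\ph_i(\al))=1$; dividing by $e(\om_i/\nu)=l_i$ yields $\om_i(\ph_i(\al))=1/l_i>0$, so $l_i\om_i(\ph_i(\al))=1=\s$. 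The hypotheses of Theorem \ref{main 2}(ii) are thus met, so Theorem \ref{main 2} gives that $R_\p[\al]$ is integrally closed. The main subtlety is just bookkeeping: making sure the $\om_j$ are labelled so that $\om_i$ is the valuation attached to $\ol{\ph_i}$, and invoking Lemma \ref{lemma}(iii) correctly to force $\om_j(\ph_i(\al))=0$ for $j\neq i$.
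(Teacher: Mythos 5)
Your proposal is correct and follows essentially the same route as the paper: it translates primes of $R_L$ above $\p$ into valuations of $L$ extending $\nu$ via $v_{\Q_i}=e_i\om_i$, then applies Theorem \ref{main 2} (with Corollary \ref{phi}, Lemma \ref{lemma}(iii), and the Remark giving $e_i=l_i$, $f_i=m_i$) in both directions. Your explicit check of $\Q_i=\p R_L+\phi_i(\al)R_L$ by computing $\min\bigl(l_j,v_{\Q_j}(\phi_i(\al))\bigr)$ for all $j$ is just a slightly more detailed rendering of the paper's own computation $\nu_{\Q_j}(\phi_i(\al))=0$ for $j\neq i$ and $\nu_{\Q_i}(\phi_i(\al))=1$ (or $e_i=1$), so no genuinely different ideas are involved.
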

\begin{proof}
{To begin with, we recall  that the condition $R_\p[\al]$ is integrally closed is equivalent to $R_\p[\al] = (R_L)_\p$, where  $R_L$ is the integral closure of $R$ in $L$.}
Let $\p R_L= \prod_{i=1}^t {\Q_i}^{e_i}$ be the factorization of $\p R_L$ into powers of prime ideals of $R_L$. { We } show that   $R_{\p}[\al]$ is integrally closed if and only if $t=r$ and for every $i=1, \dots, r$,
$\Q_i=\p R_L+\phi_i(\al) R_L$.\\
 Let ${\mathcal P}=\{\Q_1,\dots,\Q_t\}$ be the set of all prime ideals of $R_L$ lying above $\p$ and  $Ext_\nu(L)$  the set of all valuations of $L$ extending $\nu$. For every  $\om\in Ext_\nu(L)$, let $R_\om$ be the  valuation ring of $\om$ and $M_\om$ its maximal ideal. As  $M_\om\cap R_L$ is a prime ideal of $R_L$ lying above $\p$, $M_\om\cap R_L=\Q_i$, $\om=\frac{1}{e_i}\nu_{\Q_i}$, and  $e_i=|\Ga_\om/\Z|$ is the ramification index of $\om$ for some $i=1,\dots,t$, where $\nu_{\Q_i}$ is the $\Q_i$-adic valuation. It follows that $\iota \, : {\mathcal P}\longrightarrow Ext_\nu(L)$ defined  by  $\iota(\Q_i)=\frac{1}{e_i} \nu_{\Q_i}$ is a one-one correspondence.
 Thus, by Theorem \ref{main 2}, ${(R_L)}_\p=R_\p[\al]$ if and only if $t=r$ and, for every
$i=1,\dots,r$, $l_i\ge 2$ implies that $\nu_{\Q_i}(\phi_i(\al))=l_i\cdot \frac{1}{e_i}$ (the  last equality follows from Corollary \ref{phi} because $1$ is the minimal element of $\Ga_\nu^+=\Z^+$).
By the previous Remark, if $R_{\p}[\al]$ is integrally closed { and $l_i\ge 2$,} then  $e_i=l_i$. Thus $\nu_{\Q_i}(\phi_i(\al))=e_i\om_i(\phi_i(\al)))=1$. { In this case,  for $j=1,\dots,r$ with $j\neq i$, we have  $\nu_{\Q_j}(\phi_i(\al))=0$ and $\nu_{\Q_i}(\phi_i(\al))=1$. Thus,   $\Q_i=\p R_L+\phi_i(\al) R_L$.} Finally,
$R_{\p}[\al]$ is integrally closed if and only if
 for every $i=1,\dots,r$, $l_i\ge 2$ implies that $e_i=l_i$ and  $\Q_i=\p R_L+\phi_i(\al) R_L$. { For $l_i=1$, let  $n=$deg$(f)$, $m_i=$deg$(\ph_i)$, and $f_i=f(\Q_i/\p)$ be  the residue degree of $\Q_i$. As $\sum_{i=1}^r{e_if_i}=n=\sum_{i=1}^r{l_im_i}$ and for every $i=1,\dots,r$, $e_i\ge 1$ and $f_i\ge m_i$ (because $m_i$ divides $f_i$),  if $l_i=1$, then $e_i=1$ and $f_i=m_i$. Thus,  $R_{\p}[\al]$ is integrally closed if and only if $\p R_L= \prod_{i=1}^r \Q_i^{l_i}$, where for every $i=1,\dots,r$, $l_i\ge 2$ implies
 that $\Q_i=\p R_L+\phi_i(\al) R_L$.}
 Recall that  for every $i\neq j$,  $\nu_{\Q_j}(\phi_i(\al))=0$ and $\nu_{\Q_i}(\phi_i(\al))>0$. Thus if $e_i=1$, then $\Q_i=\p R_L+\phi_i(\al) R_L$ regardless of whether  $\nu_{\Q_i}(\phi_i(\al))=1$ or not.
\end{proof}

\section*{Acknowledgement}
Based on personal communications with Professor Yuri Ershov, he kindly shared with us the news that he independently has practically the same proof of Theorem \ref{Endler}. We deeply thank him for the fruitful discussion and encouragement.

\end{document}